\title{Increasing Trees and the Degree-Chromatic Polynomial}
\author{Medet Jumadildayev}
\address{Nazarbayev University, Astana, Kazakhstan}
\email{medet.jumadildayev@nu.edu.kz}
\subjclass[2020]{05A15, 05C05, 05A05}
\date{}
\newtheorem{thm}{Theorem}[section]
\newtheorem{dfn}[thm]{Definition}
\newtheorem{crl}[thm]{Corollary}
\newtheorem{prp}[thm]{Proposition}
\newtheorem{lm}[thm]{Lemma}
\begin{document}

\maketitle

\begin{abstract}
    This paper studies increasing trees on $n$ labeled vertices, in which labels increase from the root to the leaves. It is known that the number of binary increasing trees coincides with the number of alternating permutations (Euler numbers). Riordan obtained explicit formulas for the numbers of ternary and quaternary trees. This article derives a general formula for the number of $m\text{-ary}$ increasing trees for any $m$. The main result is expressed in terms of the degree-chromatic polynomial of the complete graph and Bell polynomials. It is shown how the corresponding generating function is related to the inversion problem and how combinatorial methods, including the lemma on coefficients of the multiplicative inverse function and the Lagrange inversion formula, can be used to compute the coefficients. A connection is also established between the values of the degree-chromatic polynomial at $\lambda=-1$ and the numbers of special permutations studied by Gessel.
\end{abstract}

\section{Introduction}

A labeled tree of size $n$ is a rooted tree on $n$ vertices that are labeled by $n$ distinct integers. An increasing tree is a labeled tree such that the labels along the path from the root to any leaf are increasing. An $m\text{-ary}$ increasing tree is an increasing tree such that all vertices have an out-degree at most $m$. Riordan \cite{riordan1978forests} considers the enumeration of labeled $m\text{-ary}$ rooted planar increasing trees. Similar to Riordan's notation, denote $T_n(m)$ as the number of $m\text{-ary}$ increasing trees on $n$ vertices. It is a well-known result that the number of binary increasing trees $T_n(2)$ is the Euler number $E_n$. Riordan managed to find explicit formulas for $T_n(3)$ and $T_n(4)$. 

In this paper, we obtain the formula for the number of $\text{$m$-ary}$ increasing trees for any $m$. They can be expressed using the degree-chromatic polynomials introduced in \cite{Humpert}. Section 2 outlines the necessary tools for coefficient extraction. In Section 3, we discuss the technical details of the degree-chromatic polynomial. We prove the main result in Section 4:

\begin{thm}\label{main} Let $T_n(m)$ be the number of $m\text{-ary}$ increasing trees with $n$ vertices. Let $\chi_m(G, \lambda)$ denote the degree-chromatic polynomial of $G$. We denote $B_{n, k}(x_1, x_2, \cdots, x_{n - k + 1})$ as the Bell polynomial. The number of  $m\text{-ary}$ increasing trees can be expressed in terms of $\chi_m(G, \lambda)$:
    \[
        T_n(m) = \sum_{k = 0} ^ {n} (-1) ^ k B_{n + k - 1, k} (\chi_m(K_0, -1), \chi_m(K_1, -1), \cdots, \chi_m(K_{n - 1}, -1)),
    \]
    where for any $s \geq 1$,
    \[
        \chi_m(K_s,  -1) = \sum_{k = 1} ^ s (-1) ^ k k! B_{s, k}(1_m).
    \]
\end{thm}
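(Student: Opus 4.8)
The plan is to push everything through the exponential generating function $Y(z) = \sum_{n\ge 1} T_n(m)\frac{z^n}{n!}$ and reduce the problem to reverting a single power series. First I would record the functional equation coming from the root-decomposition of an increasing tree: cutting at the minimum-labelled root, whose at most $m$ root-subtrees form an unordered set of strictly smaller increasing trees, gives at the level of exponential generating functions $Y'(z) = \phi(Y(z))$ with $Y(0)=0$, where $\phi(u) = \sum_{d=0}^{m}\frac{u^d}{d!}$ is the truncated exponential. (The case $m=2$ recovers $Y=\sec z+\tan z-1$, the alternating permutations, as a sanity check.) Since $\phi(0)=1\ne 0$, the relation is separable: setting $\Psi(y)=\int_0^y \frac{dt}{\phi(t)}$ we obtain $z=\Psi(Y(z))$, so $Y$ is exactly the compositional inverse of $\Psi$ and $T_n(m)=n!\,[z^n]\Psi^{\langle-1\rangle}(z)$.

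Second, I would compute the coefficients of $\Psi$, which is where the degree-chromatic polynomial enters. Reading $\chi_m(G,\lambda)$ as the number of $\lambda$-colourings in which every monochromatic class spans a subgraph of maximum degree below $m$, a class of $K_s$ of size $j$ spans $K_j$ and is admissible iff $j\le m$; choosing the partition into $k$ admissible blocks and colouring them injectively gives $\chi_m(K_s,\lambda)=\sum_k \lambda^{\underline k}\,B_{s,k}(1_m)$, where $1_m$ has $x_i=1$ for $i\le m$ and $0$ otherwise. Using $(-1)^{\underline k}=(-1)^k k!$ yields the second displayed identity. Summing the Bell-polynomial exponential generating function then gives the compact form $\sum_{s\ge 0}\chi_m(K_s,\lambda)\frac{u^s}{s!}=\phi(u)^\lambda$; at $\lambda=-1$ this reads $\frac{1}{\phi(u)}=1+\sum_{s\ge 1}\chi_m(K_s,-1)\frac{u^s}{s!}$, and integrating term by term produces $\Psi(y)=y+\sum_{n\ge 2}\chi_m(K_{n-1},-1)\frac{y^n}{n!}$.

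Third comes the inversion. Because the linear coefficient of $\Psi$ is $1$, Lagrange inversion gives $T_n(m)=(n-1)!\,[y^{n-1}]\bigl(1+A(y)\bigr)^{-n}$, where $A(y)=\Psi(y)/y-1=\sum_{i\ge 1}\frac{\chi_m(K_i,-1)}{(i+1)!}\,y^i$. Expanding the power of a series by Faà di Bruno through partial Bell polynomials gives $[y^{n-1}](1+A)^{-n}=\frac{1}{(n-1)!}\sum_k (-n)^{\underline k}\,B_{n-1,k}(\beta_\bullet)$ with $\beta_i=\frac{\chi_m(K_i,-1)}{i+1}$, and $(-n)^{\underline k}=(-1)^k (n+k-1)^{\underline k}$. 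The final move is a reindexing identity: multiplying the series $A$ by $y$ shifts the lower Bell index, so comparing the exponential generating functions of $A(y)^k$ and $(yA(y))^k$ yields $B_{n+k-1,k}(\delta_\bullet)=(n+k-1)^{\underline k}B_{n-1,k}(\beta_\bullet)$ with $\delta_i=i\beta_{i-1}=\chi_m(K_{i-1},-1)$ for $i\ge 2$ and $\delta_1=0$. Substituting collapses the sum to $\sum_k(-1)^k B_{n+k-1,k}(\chi_m(K_0,-1),\dots,\chi_m(K_{n-1},-1))$, where the leading argument is taken with the empty-sum convention $\chi_m(K_0,-1)=0$; since that argument vanishes, every partition containing a singleton block drops out, which forces the summand to be zero for $k\ge n$ and explains the harmless upper limit $n$.

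The step I expect to be the main obstacle is the last one: establishing the Bell-polynomial reindexing identity and verifying that the $\frac{1}{i+1}$ factors created by integrating $1/\phi$ cancel exactly against the rising factorials $(n+k-1)^{\underline k}$ produced by the shift, so that the clean weights $\chi_m(K_{i-1},-1)$ survive without residual coefficients. I would prove that identity directly from $\sum_{N\ge k}B_{N,k}\frac{t^N}{N!}=\frac{1}{k!}\bigl(\sum_i x_i\frac{t^i}{i!}\bigr)^k$ by comparing the series for $A(y)^k$ and $(yA(y))^k$. A secondary care point is the bookkeeping around $s=0$: the constant term of $1/\phi$ equals $1$ and supplies the linear term of $\Psi$, whereas the first Bell argument must be $0$, so the symbol $\chi_m(K_0,-1)$ has to be handled consistently as the empty-sum value $0$.
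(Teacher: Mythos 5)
Your proof is correct, arrives at exactly the paper's formula, and shares the paper's skeleton---the root decomposition giving $Y'=\phi(Y)$, $Y(0)=0$, hence $Y=\bigl(\int_0^t \frac{ds}{\phi(s)}\bigr)^{\langle -1\rangle}$ (Theorem~\ref{gf_explicit} specialized to $x_0=\cdots=x_m=1$), followed by a Bell-polynomial form of Lagrange inversion (Theorem~\ref{closed_form})---but two of your ingredients are genuinely different. First, the paper makes the degree-chromatic connection indirectly: it computes the coefficients $b_s$ of $1/x(t)$ via the general multiplicative-inverse expansion of Lemma~\ref{mult_inverse} and then observes that the resulting expression coincides with the formula of Corollary~\ref{inverse_char}; you instead prove $\sum_{s\ge 0}\chi_m(K_s,\lambda)\frac{u^s}{s!}=\phi(u)^{\lambda}$ (immediate for $\lambda\in\mathbb{Z}_{\ge 0}$, since a valid $\lambda$-coloring of $K_s$ is a sequence of $\lambda$ color classes each of size at most $m$, and valid for all $\lambda$ by polynomiality of both sides) and read off $1/\phi$ at $\lambda=-1$. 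This is more conceptual: it explains \emph{why} $\chi_m(K_s,-1)$ appears, rather than verifying that two Bell-polynomial formulas agree. Second, where the paper cites Comtet's inversion formula (Lemma~\ref{comtet}) as a black box, you rederive it: classical Lagrange inversion yields $T_n(m)=(n-1)!\,[y^{n-1}]\bigl(1+A(y)\bigr)^{-n}$ with $A(y)=\sum_{i\ge 1}\frac{\chi_m(K_i,-1)}{(i+1)!}y^i$, and your shift identity $B_{n+k-1,k}(\delta_\bullet)=(n+k-1)(n+k-2)\cdots n\,B_{n-1,k}(\beta_\bullet)$ for $\delta_1=0$, $\delta_i=i\beta_{i-1}$ is correct and follows, exactly as you propose, from comparing $\frac{1}{k!}\bigl(yA(y)\bigr)^k=\frac{y^k}{k!}A(y)^k$ coefficientwise; the integration factors $\frac{1}{i+1}$ do cancel, leaving $\delta_i=\chi_m(K_{i-1},-1)$, and your pigeonhole remark correctly disposes of the $k\ge n$ terms (alternatively, $B_{n-1,k}(\beta_\bullet)=0$ for $k>n-1$). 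One further point in your favor: you handle $\chi_m(K_0,-1)$ honestly as the empty-sum convention equal to $0$ required by the $0$ in the first slot of Lemma~\ref{comtet}'s Bell polynomial (the genuine chromatic count of $K_0$ is $1$, which is what supplies the linear term of $\Psi$), whereas the paper simply asserts $\chi_m(K_0,-1)=0$; your bookkeeping makes that convention explicit and consistent.
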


\section{Coefficient Extraction}

In this section, we outline the preliminary results for coefficient extraction from generating functions. For that, we define the Bell polynomial.

\begin{dfn}
    A Bell polynomial $B_{n, k}(x) = B_{n, k}(x_1, x_2, \cdots, x_{n - k + 1})$ is defined as
    $$B_{n, k}(x) = \sum_{\alpha} \frac{n!}{\alpha_1! \alpha_2! \cdots \alpha_{n - k + 1}!} \left(\frac{x_1}{1!}\right) ^ {\alpha_1} \left(\frac{x_2}{2!}\right) ^ {\alpha_2} \cdots \left(\frac{x_{n - k + 1}}{(n - k + 1)!}\right) ^ {\alpha_{n - k + 1}},$$
    where the sum is taken over all sequences $\alpha$ of non-negative integers satisfying following two conditions.
    
    \begin{enumerate}
        \item $\alpha_1 + \alpha_2 + \alpha_3 + \cdots + \alpha_{n - k + 1} = k$,
        \item $\alpha_1 +2 \alpha_2 + 3\alpha_3 + \cdots + (n - k+ 1) \alpha_{n - k + 1} = n$.
    \end{enumerate}
\end{dfn}

The coefficients of the expansion of the multiplicative inverse can be obtained using the following lemma.

\begin{lm}\label{mult_inverse} Let \(x(t) = \sum_{n\geq 0} x_n \dfrac{t ^ n}{n!}\) be an arbitrary generating function such that it has a non-zero constant term. Then, the coefficients of its multiplicative inverse \(\dfrac{1}{x(t)}\) can be computed as follows:
    \[
    \left[ \dfrac{t ^ n}{n!} \right] \dfrac{1}{x(t)} = \dfrac{1}{x_0} \sum_{k = 0} ^ {n} \dfrac{k!}{(-x_0) ^ k} B_{n, k} \left( x_1, x_2, \cdots, x_{n - k + 1}\right).
    \]
\end{lm}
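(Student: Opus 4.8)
The plan is to reduce the inverse to a geometric series and then apply the fundamental identity expressing powers of an exponential generating function through partial Bell polynomials. Since $x(t)$ has a nonzero constant term $x_0$, I would first factor it out and write $\frac{1}{x(t)} = \frac{1}{x_0}\cdot\frac{1}{1 - u(t)}$, where $u(t) := 1 - \frac{x(t)}{x_0} = -\frac{1}{x_0}\sum_{n\geq 1} x_n \frac{t^n}{n!}$ is a formal power series with zero constant term. Because $u(t)$ has no constant term, the geometric expansion $\frac{1}{1-u(t)} = \sum_{k\geq 0} u(t)^k$ is legitimate in the ring of formal power series: for each fixed $n$, only the terms with $k \le n$ can contribute to the coefficient of $t^n$, so the sum is finite coefficientwise.

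The key step is the identity
\[
\left(\sum_{m\geq 1} g_m \frac{t^m}{m!}\right)^{k} = k!\sum_{n\geq k} B_{n,k}(g_1, g_2, \ldots, g_{n-k+1})\,\frac{t^n}{n!}.
\]
I would derive this directly from the definition of $B_{n,k}$. Expanding the $k$-th power by the multinomial theorem, a monomial $t^{m_1 + \cdots + m_k}$ arises from each ordered choice $(m_1, \ldots, m_k)$ of positive integers; collecting these by the multiplicities $\alpha_j = \#\{i : m_i = j\}$ produces the factor $\frac{k!}{\alpha_1! \alpha_2! \cdots}$ counting the orderings, together with $\prod_j (g_j/j!)^{\alpha_j}$. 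The constraints $\sum_j \alpha_j = k$ and $\sum_j j\alpha_j = n$ are exactly conditions (1) and (2) in the definition of the Bell polynomial, so grouping by $n = \sum_j j \alpha_j$ yields the displayed identity after reconciling the factor $n!$.

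Applying this with $g_m = u_m = -x_m/x_0$ and invoking the homogeneity relation $B_{n,k}(c x_1, c x_2, \ldots) = c^k B_{n,k}(x_1, x_2, \ldots)$ with $c = -1/x_0$ (itself immediate from the definition, since every summand carries total degree $\sum_j \alpha_j = k$ in the variables) gives
\[
u(t)^k = k!\sum_{n\geq k}\frac{(-1)^k}{x_0^k} B_{n,k}(x_1, \ldots, x_{n-k+1})\,\frac{t^n}{n!}.
\]
Summing over $k$ and extracting $\left[t^n/n!\right]$ then produces $\frac{1}{x_0}\sum_{k=0}^{n}\frac{k!}{(-x_0)^k} B_{n,k}(x_1, \ldots, x_{n-k+1})$, which is the claimed formula; the upper limit $k=n$ reflects that $B_{n,k}=0$ for $k>n$, and the $k=0$ term correctly reproduces $1/x_0$ via $B_{0,0}=1$ and $B_{n,0}=0$ for $n\geq1$. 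I do not expect a serious obstacle here: the only points requiring care are the formal-power-series justification of the geometric series and the bookkeeping in the multinomial expansion that establishes the Bell-polynomial power identity, which is the technical heart of the argument.
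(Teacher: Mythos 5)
Your proposal is correct and follows essentially the same route as the paper's proof: factor out $x_0$, expand $\frac{1}{1-u(t)}$ as a geometric series (valid since $u$ has no constant term), and identify the multinomial expansion of $u(t)^k$ with $k!\,B_{n,k}$ via the standard power identity for exponential generating functions, using homogeneity to extract the factor $(-x_0)^{-k}$. If anything, you are slightly more careful than the paper about the coefficientwise finiteness of the geometric series and the boundary cases $k=0$ and $k>n$, but there is no substantive difference in method.
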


\begin{proof}
    We may rewrite the expression \(\dfrac{1}{x(t)}\) as follows:

    \[
    \dfrac {1}{x(t)} = \dfrac{1}{x_0}\dfrac{1}{1 - \sum_{n \geq 1} \dfrac{x_n}{-x_0} \dfrac{t ^ n}{n!}}.
    \]

    Further, let's expand this expression using the geometric series identity \( 1 / (1 - a) = 1 + a + a^ 2 + \cdots\) to obtain:

    \begin{equation}\label{mult_aux}
        \dfrac{1}{x(t)} = \dfrac{1}{x_0}\dfrac{1}{1 - \sum_{n \geq 1} \dfrac{x_n}{-x_0} \dfrac{t ^ n}{n!}} = \dfrac{1}{x_0} \sum_{k \geq 0} \left( \sum_{n \geq 1} \dfrac{x_n}{-x_0} \frac{t ^ n}{n!} \right) ^ {k}.
    \end{equation}

    The summand can be expanded as follows:

    \[
    \left( \sum_{n \geq 0} \dfrac{x_n}{-x_0} \frac{t ^ n}{n!} \right)  ^ {k} = \dfrac{1}{(-x_0) ^ k}\sum_{n \geq 0} \dfrac{t ^ n}{n!} \sum_{\alpha} \binom{k}{\alpha_1, \alpha_2, \cdots} \binom{n}{1 ^ {\alpha_1} 2 ^ {\alpha_2} \cdots } x_1 ^ {\alpha_1} x_2 ^ {\alpha_2} \cdots,
    \]
    where $\alpha$ is a sequence which satisfies the following conditions:
    \begin{enumerate}
        \item \(\sum_{i \geq 1} \alpha_i = k\)
        \item \(\sum_{i \geq 1} i \alpha_i = n\)
    \end{enumerate}
    The expression can be rewritten using Bell Polynomials:
    \[
    \dfrac{1}{(-x_0) ^ k}\sum_{n \geq 0} \dfrac{t ^ n}{n!} \sum_{\alpha} \binom{k}{\alpha_1, \alpha_2, \cdots} \binom{n}{1 ^ {\alpha_1} 2 ^ {\alpha_2} \cdots } x_1 ^ {\alpha_1} x_2 ^ {\alpha_2}  \cdots =
    \]
    \[
    \dfrac{1}{(-x_0) ^ k}  \sum_{n \geq 0} \dfrac{t ^ n}{n!} k! B_{n, k}(x_1, x_2, \cdots, x_{n - k + 1}).
    \]
    Substituting this to equation (\ref{mult_aux}), we obtain the expansion:
    \begin{dmath*}
        \dfrac{1}{x(t)} = \dfrac{1}{x_0} \sum_{k \geq 0} \dfrac{1}{(-x_0) ^ k} \sum_{n \geq 0} \dfrac{t ^ n}{n!} k! B_{n, k}(x_1, x_2, \cdots, x_{n - k + 1}) = \sum_{n \geq 0} \dfrac{t ^ n}{n!} \left( \dfrac{1}{x_0} \sum_{k \geq 0} \dfrac{k!}{(-x_0) ^ k} B_{n, k}(x_1, x_2, \cdots, x_{n - k + 1}) \right).
    \end{dmath*}
    Then, the coefficient of $t ^ n /n!$ has the following form:
    \[
    \left[ \dfrac{t ^ n}{n!}\right] \dfrac{1}{x(t)} = \dfrac{1}{x_0} \sum_{k = 0} ^ n \dfrac{k!}{(-x_0) ^ k} B_{n, k}(x_1, x_2, \cdots, x_{n - k + 1}).
    \]
\end{proof}

Another important result is the Lagrange Inversion Formula in terms of Bell Polynomials \cite{Comtet}:

\begin{lm}\label{comtet} Let $x(t) = \sum_{n \geq 1} x_n\frac{t ^ n}{n!}$. Then, the coefficients $x_n ^ {\langle -1 \rangle}$ of its compositional inverse $x^{\langle -1 \rangle}(t)$ can be obtained using the following:

    \[
        x_n^{\langle-1 \rangle} =
        \begin{cases}
          \dfrac{1}{x_1}, & \text{if } n = 1, \\\\
          \displaystyle \sum_{k = 1} ^ {n - 1} \frac{(-1) ^ k}{x_1 ^ {n + k}} B_{n + k - 1, k}(0, x_2, x_3, \cdots, x_n), & \text{if } n > 1.
        \end{cases}
    \]
\end{lm}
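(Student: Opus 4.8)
The plan is to derive this Bell-polynomial form directly from the classical Lagrange--B\"urmann inversion formula, which I take as the analytic starting point: for \(x(t) = \sum_{n \geq 1} x_n t^n/n!\) with \(x_1 \neq 0\), the compositional inverse \(y(t) = x^{\langle -1 \rangle}(t)\) satisfies
\[
y_n^{\langle -1 \rangle} = n!\,[t^n]\,y(t) = (n-1)!\,[t^{n-1}]\left(\frac{t}{x(t)}\right)^n .
\]
Everything after this is formal power-series bookkeeping whose only purpose is to force the Bell polynomials \(B_{n+k-1,k}(0, x_2, \dots, x_n)\) to surface. The case \(n = 1\) is immediate, since \(y(t) = (1/x_1)\,t + O(t^2)\) gives \(y_1^{\langle -1 \rangle} = 1/x_1\), so I would assume \(n > 1\) throughout the rest.

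The first step is to isolate the linear part of \(x\). Writing \(x(t) = x_1 t + v(t)\) with \(v(t) = \sum_{j \geq 2} x_j t^j / j! = O(t^2)\), I factor
\[
\left(\frac{t}{x(t)}\right)^n = \frac{1}{x_1^n}\left(1 + \frac{v(t)}{x_1 t}\right)^{-n} = \frac{1}{x_1^n}\sum_{k \geq 0} (-1)^k \binom{n+k-1}{k} \frac{v(t)^k}{x_1^k\, t^k},
\]
using the negative binomial expansion together with \(\binom{-n}{k} = (-1)^k \binom{n+k-1}{k}\). The key observation is that the generating identity for Bell polynomials, already implicit in the proof of Lemma \ref{mult_inverse}, yields
\[
v(t)^k = \left(\sum_{j \geq 2} x_j \frac{t^j}{j!}\right)^k = k! \sum_{m \geq 0} B_{m,k}(0, x_2, x_3, \dots) \frac{t^m}{m!},
\]
precisely because setting the first argument to \(0\) deletes the \(j = 1\) term from the inner sum. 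Substituting this and reading off the coefficient of \(t^{n-1}\) forces \(m - k = n - 1\), i.e.\ \(m = n + k - 1\).

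The final step is to check that the numerical factors collapse. After multiplying by \((n-1)!\) as dictated by the inversion formula, the coefficient attached to each surviving Bell polynomial is
\[
\binom{n+k-1}{k}\frac{k!\,(n-1)!}{(n+k-1)!} = 1,
\]
so that \(y_n^{\langle -1 \rangle} = \sum_{k \geq 0} \frac{(-1)^k}{x_1^{n+k}} B_{n+k-1,k}(0, x_2, \dots, x_n)\). Pinning down the summation range is the step I expect to demand the most care. On the low end the \(k = 0\) term carries \(B_{n-1,0}\), which vanishes for \(n > 1\); on the high end, since \(v(t) = O(t^2)\) one has \(v(t)^k = O(t^{2k})\), so \(B_{m,k}(0, x_2, \dots) = 0\) whenever \(m < 2k\), and at \(m = n + k - 1\) this annihilates every term with \(k > n - 1\). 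Together these truncate the sum to \(k = 1, \dots, n-1\), matching the statement. I would also verify that the surviving polynomials genuinely depend only on \(x_2, \dots, x_n\), which follows because \(B_{n+k-1,k}\) has exactly \((n+k-1) - k + 1 = n\) arguments.

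The honest caveat is that the genuine analytic content here is the Lagrange--B\"urmann formula itself, which is not established in the excerpt; the present lemma is its combinatorial restatement. If a fully self-contained argument were wanted, the heavier part would instead be proving the inversion formula from the functional equation \(x(y(t)) = t\), after which the Bell-polynomial translation above goes through unchanged.
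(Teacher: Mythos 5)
Your proof is correct. There is, however, no internal proof to compare it against: the paper states this lemma as a known result with a bare citation to Comtet, so your derivation supplies an argument the paper omits entirely --- and it is essentially the classical one (Comtet's own treatment of the Bell-polynomial inversion formula also proceeds from Lagrange inversion). The bookkeeping all checks out: the form $y_n^{\langle -1 \rangle} = (n-1)!\,[t^{n-1}]\bigl(t/x(t)\bigr)^n$ of Lagrange--B\"urmann is the standard one for exponential coefficients; the factorization $x(t) = x_1 t\bigl(1 + v(t)/(x_1 t)\bigr)$ with the negative binomial series is a legitimate formal-power-series manipulation since $v(t)/(x_1 t) = O(t)$, which is also what makes the sum over $k$ locally finite; the cancellation $\binom{n+k-1}{k}\,k!\,(n-1)!/(n+k-1)! = 1$ is exact; and your truncation is right at both ends --- $B_{n-1,0} = 0$ for $n > 1$ kills $k = 0$, while $B_{n+k-1,k}(0, x_2, \dots) = 0$ whenever $n + k - 1 < 2k$ (every block must have size at least $2$ once the first argument vanishes) kills $k \geq n$. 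Your honest caveat is fair but harmless in context: the lemma's only genuine content beyond bookkeeping is Lagrange--B\"urmann itself, and since the paper is content to cite Comtet for the entire lemma, taking the inversion formula as given rests on no weaker a foundation than the paper's own. One hypothesis worth stating explicitly, as you do, is $x_1 \neq 0$, which the paper's formulation leaves implicit.
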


\section{Degree-Chromatic polynomials}

Denote \(\chi_m(G, \lambda)\) as the number of colorings of $G$ in $\lambda$ colors such that for any vertex $v \in V(G)$, there is at most $m - 1$ vertices $u \in V(G)$ such that $u$ and $v$ are adjacent and have the same color. Humpert \cite{Humpert} showed that $\chi_m(G, \lambda)$ is a polynomial using Hopf Algebras of graphs. We call $\chi_m(G, \lambda)$ the degree-chromatic polynomial of $G$. In this section, we compute the degree-chromatic polynomial of the complete graph.

Let $K_n$ be the complete graph. Denote $1_m = (1, 1, \cdots ,1, 0, 0, \cdots)$ an infinite sequence with $m$ ones. Then, the degree-chromatic polynomial $\chi_m(K_n, \lambda)$ can be computed as follows:

\begin{lm}\label{chromatic_polynomial} Let $\chi_m(K_n, \lambda)$ denote the degree-chromatic polynomial of the complete graph $K_n$. Then, 

\[
\chi_m(K_n, \lambda) = \sum_{k = 1} ^ n B_{n, k} (1_m) (\lambda)_k,
\]
where $(\lambda)_k = \lambda(\lambda-1)\cdots(\lambda-k+1)$ is the falling factorial.
    
\end{lm}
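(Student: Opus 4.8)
The plan is to give a direct combinatorial argument valid for every positive integer $\lambda$, and then to invoke the polynomiality of $\chi_m(K_n,\lambda)$ (established in \cite{Humpert}) to conclude that the identity holds as polynomials. The first step is to translate the defining condition of the degree-chromatic polynomial into a statement purely about colour-class sizes. In the complete graph $K_n$ every vertex $v$ is adjacent to all of the remaining $n-1$ vertices, so the requirement that at most $m-1$ neighbours of $v$ share the colour of $v$ is exactly the requirement that at most $m-1$ \emph{other} vertices carry the same colour as $v$; equivalently, every colour class has size at most $m$. Thus, for $\lambda$ a positive integer, $\chi_m(K_n,\lambda)$ counts the maps $c\colon[n]\to[\lambda]$ in which each colour is used at most $m$ times.

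Next I would organise these colourings according to the set partition of $[n]$ into colour classes. A valid colouring determines the partition $\{c^{-1}(i): i \text{ used}\}$ into, say, $k$ nonempty blocks, each of size at most $m$; conversely, a colouring inducing a prescribed such partition is the same as an injection from its $k$ blocks into the $\lambda$ available colours, of which there are $(\lambda)_k$. Grouping the count this way gives
\[
\chi_m(K_n,\lambda) = \sum_{k=1}^n N_{n,k}\,(\lambda)_k,
\]
where $N_{n,k}$ is the number of set partitions of $[n]$ into $k$ blocks each of size at most $m$. It remains to identify $N_{n,k}$ with $B_{n,k}(1_m)$.

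To finish, I would read off the coefficient count from the definition of the partial Bell polynomial. Writing a set partition by its block-size profile $\alpha=(\alpha_1,\alpha_2,\dots)$, where $\alpha_j$ records the number of blocks of size $j$, the number of set partitions of $[n]$ with that profile is the multinomial coefficient $n!\big/\prod_j (j!)^{\alpha_j}\alpha_j!$, and the two constraints $\sum_j \alpha_j = k$ and $\sum_j j\alpha_j = n$ are precisely conditions (1) and (2) in the definition of $B_{n,k}$. Evaluating at $1_m$, i.e.\ setting $x_j=1$ for $1\le j\le m$ and $x_j=0$ otherwise, retains exactly those profiles with all blocks of size at most $m$ and assigns each the correct multiplicity, so $B_{n,k}(1_m)=N_{n,k}$. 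Substituting this back yields the claimed formula for every positive integer $\lambda$; since the right-hand side is a polynomial in $\lambda$ and agrees with the polynomial $\chi_m(K_n,\lambda)$ at all positive integers, the two coincide as polynomials.

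I expect the only delicate point to be the bookkeeping in the last step: one must verify that the factors $(j!)^{\alpha_j}$ and $\alpha_j!$ in the denominator of the Bell-polynomial definition exactly account for the internal ordering within blocks of a common size and the ordering among equal-sized blocks, so that $n!\big/\prod_j (j!)^{\alpha_j}\alpha_j!$ genuinely counts \emph{unordered} set partitions with the given profile. Everything else is a matter of translating definitions, and the passage from a numerical identity to a polynomial one is automatic once the polynomiality of $\chi_m(K_n,\lambda)$ is granted.
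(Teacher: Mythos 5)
Your proof is correct and takes essentially the same route as the paper's: on $K_n$ the degree constraint forces every colour class to have size at most $m$, the partitions of the vertex set into $k$ such blocks are counted by $B_{n,k}(1_m)$, and assigning distinct colours to the blocks contributes the factor $(\lambda)_k$. You are merely more explicit than the paper on two points it leaves implicit --- that the specialization $B_{n,k}(1_m)$ genuinely counts set partitions with blocks of size at most $m$, and that the identity, verified at all positive integers $\lambda$, extends to a polynomial identity via Humpert's polynomiality result.
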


\begin{proof}
    Let $k$ be the number of distinct colors in a coloring. By definition of degree-chromatic polynomial, a vertex can have at most $m$ adjacent vertices of the same color. Because all vertices in $K_n$ are adjacent, at most $m$ vertices can be colored with the same color. This is equivalent to partitioning the vertex set into $k$ blocks, and coloring them. The number of ways to assign each block a color is the falling factorial $(\lambda)_{k}$. Thus,
    \[
    \chi_{m} (K_n, \lambda) = \sum_{k = 1} ^ {n} B_{n, k}(1_m) (\lambda)_{k}.
    \]
\end{proof}
As a corollary, we obtain the following:

\begin{crl}\label{inverse_char} Degree chromatic polynomials of complete graphs evaluated at $\lambda=-1$ can be computed as follows:

\[
\chi_m(K_n,  -1) = \sum_{k = 1} ^ n (-1) ^ k k! B_{n, k}(1_m) 
\]
\end{crl}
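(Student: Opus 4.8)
The plan is to derive Corollary~\ref{inverse_char} as a direct specialization of Lemma~\ref{chromatic_polynomial} by substituting $\lambda = -1$ into the established formula $\chi_m(K_n, \lambda) = \sum_{k=1}^n B_{n,k}(1_m)(\lambda)_k$. Since the sum is finite and the Bell polynomial coefficients $B_{n,k}(1_m)$ do not depend on $\lambda$, the only work is to evaluate the falling factorial $(\lambda)_k$ at $\lambda = -1$.

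First I would recall that the falling factorial is $(\lambda)_k = \lambda(\lambda-1)\cdots(\lambda-k+1)$. Setting $\lambda = -1$ gives
\[
(-1)_k = (-1)(-2)(-3)\cdots(-k) = (-1)^k \, k!,
\]
since there are $k$ factors, each contributing a sign $(-1)$ and absolute values $1, 2, \ldots, k$ whose product is $k!$. This is the key computation, and it is entirely routine.

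Finally I would substitute this evaluation back into the formula from Lemma~\ref{chromatic_polynomial}:
\[
\chi_m(K_n, -1) = \sum_{k=1}^n B_{n,k}(1_m)\,(-1)_k = \sum_{k=1}^n (-1)^k k!\, B_{n,k}(1_m),
\]
which is exactly the claimed identity. There is no real obstacle here: the statement is a corollary precisely because all the substantive content lives in Lemma~\ref{chromatic_polynomial}, and the only thing to verify is the sign-and-factorial pattern of the falling factorial at $-1$. The proof is therefore a one-line substitution, and I would present it as such.
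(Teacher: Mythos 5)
Your proposal is correct and matches the paper's intended argument exactly: the corollary follows from Lemma~\ref{chromatic_polynomial} (which the paper presents it as a direct consequence of, with no separate proof) by substituting $\lambda = -1$ and computing $(-1)_k = (-1)(-2)\cdots(-k) = (-1)^k\, k!$. There is nothing to add or criticize; this is the one-line substitution the paper implicitly relies on.
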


Below, we provide a table of values for the degree-chromatic polynomials:

\begin{table}[h!]
\centering
\begin{tabular}{c|rrrrrrrrr}
$m \backslash n$ & 1 & 2 & 3 & 4 & 5 & 6 & 7 & 8 & 9 \\
\hline
1 &  -1 &   2 &   -6 &    24 &   -120 &     720 &   -5040 &    40320 &  -362880 \\
2 &  -1 &   1 &    0 &    -6 &     30 &     -90 &       0 &     2520 &   -22680 \\
3 &  -1 &   1 &   -1 &     2 &    -10 &      50 &    -210 &      840 &    -4200 \\
4 &  -1 &   1 &   -1 &     1 &      0 &     -10 &      70 &     -350 &     1470 \\
5 &  -1 &   1 &   -1 &     1 &     -1 &       2 &     -14 &       98 &     -546 \\
6 &  -1 &   1 &   -1 &     1 &     -1 &       1 &       0 &      -14 &      126 \\
7 &  -1 &   1 &   -1 &     1 &     -1 &       1 &      -1 &        2 &      -18 \\
8 &  -1 &   1 &   -1 &     1 &     -1 &       1 &      -1 &        1 &        0 \\
9 &  -1 &   1 &   -1 &     1 &     -1 &       1 &      -1 &        1 &       -1 \\
\end{tabular}
\caption{Table of degree-chromatic polynomials \(\chi_m(K_n, \lambda)\) evaluated at $\lambda=-1$}
\end{table}

Gessel \cite{Gessel} studied permutations of length $2m$ such that its increasing runs are congruent to $0$ or $1$ modulo $2m$. We denote the number of such permutations $a_n(m)$. We prove that, up to sign, $\chi_{2m - 1}(K_n)$ equals $a_n(m)$:

\begin{prp}\label{gessel_eq_chromatic} Let $a_n(m)$ denote the number of permutations of length $2m$ such that its increasing runs are congruent to $0$ or $1$ modulo $2m$. For $n \geq 1$, the following holds:
    \[
        \chi_{2m - 1} (K_n, -1) = (-1) ^ {n} a_n(m).
    \]
\end{prp}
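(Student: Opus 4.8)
The plan is to prove the identity by showing that both sides are coefficients of the same exponential generating function, up to the substitution $t \mapsto -t$. Write $E(t) = \sum_{j=0}^{2m-1} t^j/j!$ for the truncated exponential. The first step handles the chromatic side. By Corollary \ref{inverse_char}, $\chi_{2m-1}(K_n,-1) = \sum_{k=1}^n (-1)^k k!\, B_{n,k}(1_{2m-1})$, and $1_{2m-1}$ is exactly the coefficient sequence of $E$ (namely $x_0 = 1$, $x_j = 1$ for $1 \le j \le 2m-1$, and $x_j = 0$ otherwise). Applying Lemma \ref{mult_inverse} to $x(t) = E(t)$ and using $k!/(-x_0)^k = (-1)^k k!$, I would identify $[t^n/n!]\,1/E(t)$ with $\chi_{2m-1}(K_n,-1)$ for every $n \ge 1$ (the $n=0$ term being $1$). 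Hence
\[
\sum_{n \ge 0} \chi_{2m-1}(K_n,-1)\frac{t^n}{n!} = \frac{1}{E(t)}.
\]

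The second step is to compute the generating function of $a_n(m)$, and this is the main obstacle. Here I would use the run decomposition of a permutation. Let $S = \{\, p \ge 1 : p \equiv 0 \text{ or } 1 \pmod{2m}\,\}$ be the allowed run lengths, so that $a_n(m)$ counts permutations of $[n]$ whose descent composition has all parts in $S$. Cutting a permutation at a chosen subset of its ascent positions sets up a bijection between ordered set partitions of $[n]$ (blocks sorted increasingly, then concatenated) and pairs consisting of a permutation together with a refinement of its descent composition; weighting a block of size $j$ by $w(j)$ therefore gives
\[
\frac{1}{1 - \sum_{j \ge 1} w(j)\,t^j/j!} = \sum_{n \ge 0}\frac{t^n}{n!}\sum_{\pi}\ \prod_{\text{parts } \beta_s \text{ of } \mathrm{co}(D(\pi))} v(\beta_s),
\]
where $v(p) = \sum_{\delta \models p}\prod_i w(\delta_i)$ is an ordinary-generating-function convolution of the weights. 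To isolate $a_n(m)$ I would solve $v(p) = [\,p \in S\,]$, i.e. $\widehat{W}(x) = P(x)/(1+P(x))$ with $P(x) = \sum_{p \in S} x^p = (x + x^{2m})/(1 - x^{2m})$; this yields the clean identity $1 - \widehat{W}(x) = (1-x^{2m})/(1+x) = \sum_{i=0}^{2m-1}(-1)^i x^i$, so $w(j) = (-1)^{j+1}$ for $1 \le j \le 2m-1$ and $w(j) = 0$ otherwise. The delicate point is keeping the two flavours of generating function straight: the weights $w(j)$ are pinned down by an \emph{ordinary}-generating-function equation but are then fed into an \emph{exponential} generating function. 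Transferring the same weights into the exponential setting gives $1 - \sum_j w(j)t^j/j! = \sum_{j=0}^{2m-1}(-1)^j t^j/j! = E(-t)$, and hence
\[
\sum_{n\ge 0} a_n(m)\frac{t^n}{n!} = \frac{1}{E(-t)}.
\]
Alternatively one may simply quote this generating function from Gessel \cite{Gessel}.

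Finally, I would compare the two expansions. Substituting $t \mapsto -t$ in the generating function for $a_n(m)$ turns $1/E(-t)$ into $1/E(t)$, which is exactly the generating function for $\chi_{2m-1}(K_n,-1)$ obtained in the first step. Reading off the coefficient of $t^n/n!$ on both sides gives $(-1)^n a_n(m) = \chi_{2m-1}(K_n,-1)$, which is the claimed equality for all $n \ge 1$. I expect the sign bookkeeping in the run-length computation — confirming that the forbidden residues collapse to the alternating truncated exponential $E(-t)$ — to be where most of the care is needed; everything else is a direct application of Lemma \ref{mult_inverse} and coefficient comparison.
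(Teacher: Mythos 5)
Your proof is correct, and while it shares its skeleton with the paper's argument --- both ultimately rest on the generating function $\sum_{n\geq 0} a_n(m)\, t^n/n! = 1\big/\sum_{k=0}^{2m-1}(-1)^k t^k/k!$ combined with Lemma \ref{mult_inverse} --- you organize the comparison differently and add one genuinely independent ingredient. The paper quotes the generating function from \cite{Gessel}, applies Lemma \ref{mult_inverse} directly to the alternating truncated exponential $E(-t)$, and absorbs the resulting signs via the homogeneity $B_{n,k}(-1,1,-1,\dots) = (-1)^n B_{n,k}(1,1,\dots)$ (used implicitly there), after which Corollary \ref{inverse_char} finishes; you instead apply Lemma \ref{mult_inverse} to $E(t)$ to identify $\sum_{n\geq 1}\chi_{2m-1}(K_n,-1)\,t^n/n!$ with $1/E(t)$, and handle all sign bookkeeping once, at the generating-function level, through the substitution $t\mapsto -t$ --- an equivalent but arguably cleaner packaging of the same homogeneity step. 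The genuinely new content is your self-contained derivation of Gessel's generating function, which the paper treats as a black box: your run-decomposition argument is sound, since the master identity $1\big/\bigl(1-\sum_{j\geq 1} w(j)\,t^j/j!\bigr) = \sum_{n\geq 0}(t^n/n!)\sum_\pi \prod_s v(\beta_s)$ holds for arbitrary weights, the triangular convolution system $v(p)=[\,p\in S\,]$ determines $w$ uniquely, and the computation $1-\widehat{W}(x) = (1-x^{2m})/(1+x) = \sum_{i=0}^{2m-1}(-1)^i x^i$ is correct (note $2m$ is even, so the telescoping works). What each approach buys: the paper's route is shorter because it outsources exactly this step to the citation, whereas yours makes the proposition independent of Gessel's paper and correctly navigates the ordinary-versus-exponential generating function subtlety you flag; you also rightly note the harmless $n=0$ mismatch ($[t^0]\,1/E(t)=1$ while the chromatic sum is empty), which is why the claim is stated only for $n\geq 1$, and your formulation incidentally avoids a typo in the paper's proof, which writes $B_{n,k}(1_m)$ where $B_{n,k}(1_{2m-1})$ is meant.
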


\begin{proof}
    The coefficients of a partial sum of the alternating exponential series are equal to $a_n(m)$ \cite{Gessel}:

    \[
    \left[ \dfrac{t ^ n}{n!} \right] \left( 1 \big/ \sum_{k = 0} ^ {2m - 1} (-1) ^ k \dfrac{t ^ k}{k!} \right) = a_n(m).
    \]
    Using Lemma \ref{mult_inverse}, we can obtain the values for $a_n(m)$:
    \[
    a_n(m) = \sum_{k = 0} ^ n (-1) ^ k k! (-1) ^ n B_{n, k}(1_m).
    \]
    Thus, from Corollary \ref{inverse_char}, we obtain the equality:
    \[
        \chi_{2m - 1} (K_n, -1) = (-1) ^ {n} a_n(m).
    \]
\end{proof}

\section{Increasing Trees}

In this section, we outline the degree-sequence generating function for the number of increasing trees. Then, we obtain the explicit formula for its coefficients. Our main result is a corollary of that formula.

\begin{thm}\label{gf_explicit} Let the degree function $x(t) = \sum_{n \geq 0} x_n\dfrac{t ^ n}{n!}$. Denote $P(t) = P(t, x)$ the degree-sequence generating function for increasing trees. Then, $P(t)$ can be expressed as the compositional inverse: 
    \[
    P(t) = \left( \int_{0} ^ {t} \dfrac{dt}{x(t)}\right) ^ {\langle -1 \rangle}.
    \]
\end{thm}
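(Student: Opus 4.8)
The plan is to set up a functional equation for the degree-sequence generating function $P(t,x)$ that encodes the recursive structure of increasing trees, and then recognize that equation as characterizing a compositional inverse. First I would make precise what $P(t,x)$ means: the coefficient $[t^n/n!]$ of $P$ should be the generating polynomial in the variables $x_0, x_1, x_2, \dots$ that records, over all $m$-ary (or unrestricted, with the degree weights supplied by $x$) increasing trees on $n$ vertices, a monomial $\prod_v x_{\deg(v)}$ weighting each vertex by the variable indexed by its out-degree. The variable $x_j$ thus acts as a formal marker for a vertex with $j$ children, so that specializing the $x_j$ later (for instance to $1$ for $j \le m$ and $0$ otherwise) recovers the counts $T_n(m)$.

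The key structural observation is the standard decomposition of a rooted increasing tree by its root. Since labels increase along root-to-leaf paths, the root must carry the smallest label, and removing the root leaves a forest of subtrees whose label sets partition the remaining vertices; each subtree is itself an increasing tree after order-preserving relabeling. If the root has out-degree $j$, it contributes a factor $x_j$ and its $j$ subtrees contribute a product of smaller $P$-type generating functions. In exponential-generating-function terms, a root of degree $j$ sitting over an unordered collection of $j$ subtrees yields, after the usual boxed-product/labelled-forest bookkeeping, the relation $P'(t) = \sum_{j \ge 0} x_j \frac{P(t)^j}{j!} = x\bigl(P(t)\bigr)$, where I am reading $x(\cdot)$ as the ordinary function $x(u) = \sum_j x_j u^j/j!$ evaluated at $u = P(t)$. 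The order/planarity conventions in Riordan's setting should be checked so that the combinatorial class really does produce the exponential weight $P^j/j!$ rather than an ordinary $P^j$; getting this normalization exactly right is where I expect the main subtlety to lie.

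Granting the differential equation $P'(t) = x\bigl(P(t)\bigr)$ with the initial condition $P(0) = 0$ (the empty tree), the rest is a separation-of-variables computation. I would rewrite it as
\[
    \frac{dP}{x(P)} = dt,
\]
integrate both sides from $0$ to $t$, and use $P(0)=0$ to get
\[
    \int_{0}^{P(t)} \frac{du}{x(u)} = t.
\]
Defining $Q(u) = \int_0^u \frac{ds}{x(s)}$, this says exactly $Q(P(t)) = t$, i.e. $P(t)$ is the compositional inverse of $Q$. Since $x$ has nonzero constant term $x_0$, the integrand $1/x(s)$ is a well-defined power series and $Q(u) = u/x_0 + O(u^2)$ has a nonzero linear coefficient, so the compositional inverse exists as a formal power series and the operation is justified. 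Renaming the integration variable back to $t$ gives
\[
    P(t) = \left( \int_{0}^{t} \frac{dt}{x(t)} \right)^{\langle -1 \rangle},
\]
which is the claimed identity.

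The main obstacle, as flagged above, is not the analytic manipulation but establishing the functional equation $P' = x(P)$ from the combinatorics with the correct exponential normalization. I would secure it by appealing to the symbolic method for labelled increasing structures: the factor $1/j!$ arises precisely because the $j$ subtrees under the root are an unordered labelled multiset in the EGF framework, and the derivative $P'$ corresponds to pointing/appending the smallest label as the new root. Once that identification is pinned down, the separation-of-variables argument above completes the proof.
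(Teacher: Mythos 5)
Your proposal is correct and takes essentially the same route as the paper: both establish the functional equation $P'(t) = x(P(t))$ from the root decomposition (the paper phrases it as the integral equation $P(t) = \int_0^t x(P(t))\,dt$ for forests attached to a new root) and then invert it, your separation of variables being equivalent to the paper's substitution of $P^{\langle -1\rangle}$ into the ODE via the derivative-of-inverse identity. If anything, you are slightly more careful than the paper in noting $P(0)=0$ and that $x_0 \neq 0$ guarantees the compositional inverse exists as a formal power series.
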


\begin{proof}
    The following is essential to prove the theorem:
    \[
    P(P^{\langle-1\rangle}(t)) = t.
    \]
    \[
    P'(P^{\langle-1\rangle}(t)) (P ^ {\langle-1 \rangle} (t))' = 1.
    \]
    \begin{equation}\label{inverse}
        (P^{\langle-1 \rangle}(t))' = \dfrac{1}{P'(P^{\langle-1 \rangle}(t))}.
    \end{equation}

    The generating function $P(t)$ can be given implicitly by constructing forests consisting of $k$ trees, and attaching their roots to a new root with weight $x_k$. Integration increases the number of nodes, thus accounting for the addition of the root:

    \[
    P(t) =  \sum_{k \geq 0} \int_{0} ^ t \left( x_k\dfrac{P(t) ^ k}{k!} \right) dt = \int_0 ^ t x(P(t)) dt.
    \]
    To obtain $P(t)$ we solve the differential equation:
    \[
    P(t) = \int_0 ^ t x(P(t)) dt.
    \]
    Differentiate both sides with respect to $t$:
    \[
    P'(t) = x(P(t)).
    \]
    Equivalently,
    \[
    P'(P^{\langle-1\rangle}(t)) = x(t).
    \]
    Applying Equation (\ref{inverse}), we get the following:
    \[
    \frac{1}{x(t)} = (P ^ {\langle-1\rangle} (t))'.
    \]
    Hence,
    \[
    \int_{0} ^ t \dfrac{dt}{x(t)} = P ^ {\langle -1 \rangle}(t).
    \]
    Finally,
    \[
    P(t) = \left(\int_{0} ^ t \dfrac{dt}{x(t)}\right) ^ {\langle -1 \rangle}.
    \]
\end{proof}

Theorem \ref{gf_explicit} is a well-known result \cite{Flajolet}. Now, we calculate the coefficients of $P(t)$ using Lemma \ref{mult_inverse} and Lemma \ref{comtet}:

\begin{thm}\label{closed_form} Let $p_n$ denote the coefficient \( \left[\dfrac{t ^ n}{n!} \right]P(t)\). Then, $p_n$ can be expressed as follows:

\[
p_n = x_0 ^ n \sum_{k = 0} ^ {n} (-x_0) ^ k B_{n + k - 1, k} \left( 0, b_1, b_2, \cdots, b_{n - 1}\right),
\]
where $b_s$ are auxiliary variables defined as follows:
\[
b_s = \dfrac{1}{x_0}\sum_{k = 0} ^ s \frac{k!}{(-x_0) ^ {k}} B_{s, k}(x_1, x_2, \cdots, x_{s - k + 1}).
\]
\end{thm}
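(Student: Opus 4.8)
The plan is to recognize that $P(t)$ is the compositional inverse of $Q(t) := \int_0^t \frac{dt}{x(t)}$, as established in Theorem~\ref{gf_explicit}, and to compute its coefficients in two stages: first extract the coefficients of $Q(t)$, then apply the Lagrange Inversion Formula of Lemma~\ref{comtet}. The auxiliary variables $b_s$ are designed precisely to be the coefficients of $Q(t)$, so the bulk of the work is a careful bookkeeping of indices.

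For the first stage, I would apply Lemma~\ref{mult_inverse} directly to $\frac{1}{x(t)}$. Its coefficients are $\left[\frac{t^s}{s!}\right]\frac{1}{x(t)} = \frac{1}{x_0}\sum_{k=0}^s \frac{k!}{(-x_0)^k} B_{s,k}(x_1, \ldots, x_{s-k+1}) = b_s$, which is exactly the definition of $b_s$. Writing $Q(t) = \sum_{n\geq 1} q_n \frac{t^n}{n!}$ and integrating $\frac{1}{x(t)} = \sum_{s\geq 0} b_s \frac{t^s}{s!}$ term by term gives $Q(t) = \sum_{s\geq 0} b_s \frac{t^{s+1}}{(s+1)!}$, so that $q_n = b_{n-1}$ for all $n \geq 1$. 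In particular $q_1 = b_0 = \frac{1}{x_0} \neq 0$, which is the hypothesis needed for Lagrange inversion.

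For the second stage, I would invoke Lemma~\ref{comtet} with the series $Q(t)$. For $n > 1$ it yields $p_n = \sum_{k=1}^{n-1} \frac{(-1)^k}{q_1^{n+k}} B_{n+k-1,k}(0, q_2, \ldots, q_n)$. Substituting $q_1 = 1/x_0$, so that $q_1^{-(n+k)} = x_0^{n+k}$, together with $q_j = b_{j-1}$, transforms this into $p_n = \sum_{k=1}^{n-1} (-1)^k x_0^{n+k} B_{n+k-1,k}(0, b_1, \ldots, b_{n-1})$, and since $x_0^n(-x_0)^k = (-1)^k x_0^{n+k}$ this is the stated formula restricted to $1 \leq k \leq n-1$.

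The main obstacle is reconciling this restricted range with the unified range $0 \leq k \leq n$ in the statement, i.e.\ showing the two boundary terms can be adjoined without changing the value. For $k = n$, the Bell polynomial $B_{2n-1, n}(0, b_1, \ldots, b_{n-1})$ vanishes: its first argument being $0$ forbids parts of size $1$, but there is no partition of $2n-1$ into $n$ parts each of size at least $2$ (the minimum such sum is $2n$), so this term contributes nothing for every $n \geq 1$. For $k = 0$, the term is $x_0^n B_{n-1, 0}(0, \ldots)$, and since $B_{m,0} = \delta_{m,0}$ this vanishes for $n > 1$ while for $n = 1$ it supplies exactly $x_0 B_{0,0} = x_0 = p_1$, recovering the base case of Lemma~\ref{comtet}. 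With both boundary terms accounted for, the sum from $k=0$ to $n$ agrees with the Lagrange inversion output for all $n \geq 1$, completing the argument.
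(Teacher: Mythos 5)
Your proof is correct and follows essentially the same route as the paper: compute the coefficients $b_s$ of $\frac{1}{x(t)}$ via Lemma~\ref{mult_inverse}, observe that integration shifts them into the coefficients of $\int_0^t \frac{dt}{x(t)}$, and apply the Lagrange inversion of Lemma~\ref{comtet}. You are in fact slightly more careful than the paper, which silently adjusts the summation range from $1 \leq k \leq n-1$ to $0 \leq k \leq n$: your explicit check that the $k=n$ term $B_{2n-1,n}(0, b_1, \ldots, b_{n-1})$ vanishes and that the $k=0$ term vanishes for $n>1$ while recovering $p_1 = x_0$ when $n=1$ fills in details the paper leaves implicit.
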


\begin{proof}
    By Lemma \ref{mult_inverse}, the coefficients of $\frac{1}{x(t)}$, which we denote as $b_s$, are expressed as follows:
    \[
    b_s = \left[ \dfrac{t ^ s}{s!}\right] \dfrac{1}{x(t)} = \dfrac{1}{x_0} \sum_{k = 0} ^ s \dfrac{k!}{(-x_0) ^ k} B_{s, k}(x_1, x_2, \cdots, x_{n - k + 1}).
    \]
    In particular,
    \[
    b_0 = \dfrac{1}{x_0}.
    \]
    Then, for $s \geq 1$, the coefficients of integration \( \int_0 ^ t \frac{dt}{x(t)}\) are the same as shifting the sequence to the right:
    \begin{equation} \label{integral1}
        \left[ \dfrac{t ^ s}{s!}\right] \int_0 ^ t \frac{dt}{x(t)} = b_{s - 1}.
    \end{equation}
    Note that the expression \ref{integral1} has no constant term, which means that we can apply Lemma \ref{comtet}. Thus, for $n > 1$, we obtain the coefficients of the compositional inverse:
    \begin{dmath*}
        p_n = \left[ \dfrac{t ^ n}{n!}\right] p(t) = \left[ \dfrac{t ^ n}{n!}\right] \left( \int_0 ^ t \frac{dt}{x(t)} \right) ^ {-1} = \sum_{k = 1} ^ {n - 1} \dfrac{(-1) ^ k}{a_0 ^ {n + k}} B_{n + k - 1, k} \left( 0, b_1, b_2, \cdots, b_{n - 1}\right) = \sum_{k = 0} ^ {n - 1} (-1) ^ k x_0 ^ {n + k} B_{n + k - 1, k} \left( 0, b_1, b_2, \cdots, b_{n - 1}\right).
    \end{dmath*}
\end{proof}

Our main result is the corollary of Theorem \ref{closed_form}:

\begin{proof}[Proof of Theorem \ref{main}]
    Setting $x_0 = x_1 = \cdots = x_m = 1$ and $x_{m + 1} = x_{m + 2} = \cdots = 0$ gives the number of $m\text{-ary}$ increasing trees:

    \[
    T_n(m) = \sum_{k = 0} ^ {n} (-1) ^ k B_{n + k - 1, k}(0, b_1, b_2, \cdots, b_{n - 1}),
    \]
    where $b_s$ is defined as follows:
    \[
    b_s = \sum_{k = 0} ^ s (-1) ^ k k! B_{s, k}(1_m)
    \]

    Note that by Corollary \ref{inverse_char}, we obtain that $b_s = \chi_m(K_s, -1)$. Also, $\chi_m(K_0, -1) = 0$. As a result, we obtain the result:

    \[
    T_n(m) = \sum_{k = 0} ^ {n} (-1) ^ kB_{n + k - 1, k}(\chi_m(K_0, -1), \chi_m(K_1, -1), \cdots, \chi_m(K_{n - 1}, -1)).
    \]
\end{proof}

We provide the table with entries $T_n(m)$, which is also given in \cite{riordan1978forests}. Note that $T_n(2)$ gives a sequence for Euler numbers $E_n$. It is obvious that the values $T_n(m)$ for $n \leq m$ are equal to $(n - 1)!$, so we left their respective entries empty.

\begin{table}[h!]
\centering
\begin{tabular}{c|rrrrrrrrrr}
$m \backslash n$ & 2 & 3 & 4 & 5 & 6 & 7 & 8 & 9 & 10 \\
\hline
1  & 1 & 1 & 1 & 1 & 1 & 1 & 1 & 1 & 1 \\
2   &  & 2 & 5 & 16 & 61 & 272 & 1385 & 7936 & 50521 \\
3   &   &  & 6 & 23 & 108 & 601 & 3863 & 28159 & 229524 \\
4   &   &   &  & 24 & 119 & 703 & 4819 & 37596 & 328871 \\
5   &   &   &   &  & 120 & 719 & 5017 & 39938 & 357100 \\
6   &   &   &   &   &  & 720 & 5039 & 40290 & 362258 \\
7   &   &   &   &   &     &  & 5040 & 40319 & 362842 \\
8   &   &   &   &   &     &     &  & 40320 & 362879 \\
9   &   &   &   &   &     &     &      &  & 362880 \\
\end{tabular}
\caption{Table for the number of $m\text{-ary}$ labeled increasing trees $T_n(m)$}
\end{table}

\section{Acknowledgements}

This research was funded by the  Science Committee of the Ministry of Science and Higher Education of the Republic of Kazakhstan (Grant No. BR 28713025). 

\printbibliography

\end{document}